
\documentclass[12pt,a4paper,oneside]{amsart}

\usepackage{amsfonts, amsmath, amssymb, amsthm, amscd}
\usepackage{hyperref}
\hypersetup{
  colorlinks   = true, 
  urlcolor     = blue, 
  linkcolor    = blue, 
  citecolor   = red 
}
\usepackage{anysize}
\marginsize{2.2cm}{2.2cm}{2.2cm}{2.2cm}

\usepackage[pdftex]{graphicx}

\newtheorem{theorem}{Theorem}

\theoremstyle{definition}

\theoremstyle{remark}


\newcounter{fig}
\newcommand{\f}{\refstepcounter{fig} Fig. \arabic{fig}. }

\newcommand{\const}{\mathop{\mathrm{const}}}

\title{$3$-webs generated by confocal conics and circles}
\author{Arseniy~Akopyan}
\address{Arseniy~Akopyan, Institute of Science and Technology Austria (IST Austria), Am Campus~1, 3400 Klosterneuburg, Austria}
\email{akopjan@gmail.com}

\thanks{Supported by People Programme (Marie Curie Actions) of the European Union's Seventh Framework Programme (FP7/2007-2013) under REA grant agreement n$^\circ$[291734].}

\begin{document}


\begin{abstract}

We consider families of confocal conics and two pencils of Apollonian circles having the same foci.
We will show that these families of curves generate \emph{trivial $3$-webs} and find the exact formulas describing them.
\end{abstract}

\maketitle

\section*{Introduction}

The concept of \emph{webs} was invented by W.~Blaschke and connected with many parts of Geometry \cite{blaschke1955einfuhrung}.
Let us recall, that a \emph{trivial $3$-web} in a planar domain $\Omega$ are three families of smooth curves such that there is a diffeomorphism $\varphi:\Omega \rightarrow \Omega' \subset \mathbb{R}^2$ taking the families to sets of lines parallel to the sides of a fixed triangle. See \cite[Lecture 18]{fuchs2007mathematical} as an introduction to the topic. 
There are several non-trivial $3$-webs formed by line- and circle-families, see \cite{nilov2014onnewconstructions} for history of the problem and new examples of such webs.

In this paper we consider webs in the positive quadrant $\mathbb{R}^2_+$ formed by confocal conics and pencils of\emph{ Apollonian circles}, which are defined in the following way:
Let $F_1$ and $F_2$ be two points in the plane called \emph{foci}.
The family of circles passing through $F_1$ and $F_2$ is called an \emph{elliptic Apollonian pencil}.
The pencil of circles orthogonal to all the circles of the first family is called a \emph{hyperbolic Apollonian pencil}.
Each circle from the latter family is a locus of points~$X$ such that $|XF_1|/|XF_2|=\const$.
We refer to \cite{akopyan2007geometry, glaeser2016universe} for this and other classical results related with circles and conics.

We construct four trivial webs defined in the positive quadrant $\mathbb{R}^2_+$. 
These webs consist of the following families of curves:\\
\emph{
1) Both families of Apollonian circles and confocal hyperbolas with foci $F_1$ and $F_2$;\\
2) Both families of Apollonian circles and confocal ellipses with foci $F_1$ and $F_2$;\\
3) Families of confocal ellipses, confocal hyperbolas and the hyperbolic Appolonian pencil with foci $F_1$ and $F_2$;\\
4) Families of confocal ellipses, confocal hyperbolas and the elliptic Appolonian pencil with foci $F_1$ and $F_2$.
}

We prove that these webs are trivial by showing a diffeomorphism from a domain $\Omega \subset \mathbb{R}^2$ to the positive quadrant $\mathbb{R}^2_+$ which maps horizontal, vertical and ``diagonal'' ($x+y$ or $x-y$ is a constant) lines to considered curves.
For webs $3$ and $4$ the images of both diagonal direction are remarkable curve: Apollonian circles and lines (vertical or horizontal).

Before going to the proof, let us say how these pencils relate with each other in an algebraic sense.
All circles in the plane can be considered as conics passing through two fixed points of the complex infinite line.
These points are called \emph{circular} and have homogeneous coordinates $I_1=(1,i,0)$ and $I_2=(1,-i,0)$.

Foci $F_1$ and $F_2$ of any conic $\alpha$ can be described as the intersection of tangent lines to $\alpha$ from the circular points.
The other pair of points of the intersection, denote it by $F_1'$ and $F_2'$, can also be considered as foci. 
On Figure~\ref{fig:projective description} the projective picture is drawn. 
Through each point $P$ in the plane pass the four conics: two conics touching sides of the quadrilateral $F_1F_1'F_2F_2'$ and two passing through its opposite vertices and $I_1$ and $I_2$.
The lines appearing in webs $3$ and $4$ can be described as lines passing through intersection of $I_1I_2$ and $F_1F_2$ (horizontal lines) and through the intersection of $I_1I_2$ and $F_1'F_2'$ (vertical lines).
Some triples of these conics have another joint intersections which correspond to points symmetric to $P$ in the axes (Figure~\ref{fig: four curves}). So, in general we describe webs formed by the family of conics tangent to four fixed lines and passing though points of their intersection.

\begin{center}
	\parbox{8cm}{
		\begin{center}
			\includegraphics{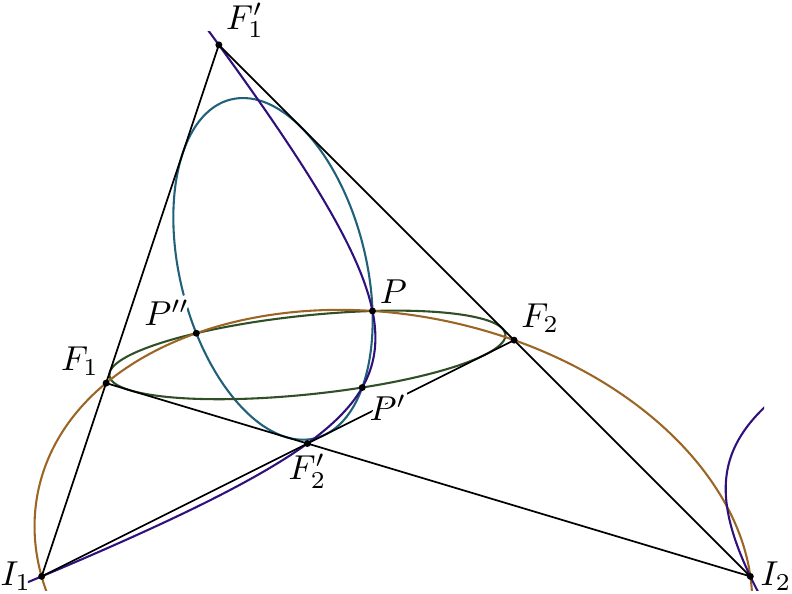}\\
			\f \label{fig:projective description}
		\end{center}
			}
	\parbox{7cm}{
		\begin{center}
			\includegraphics{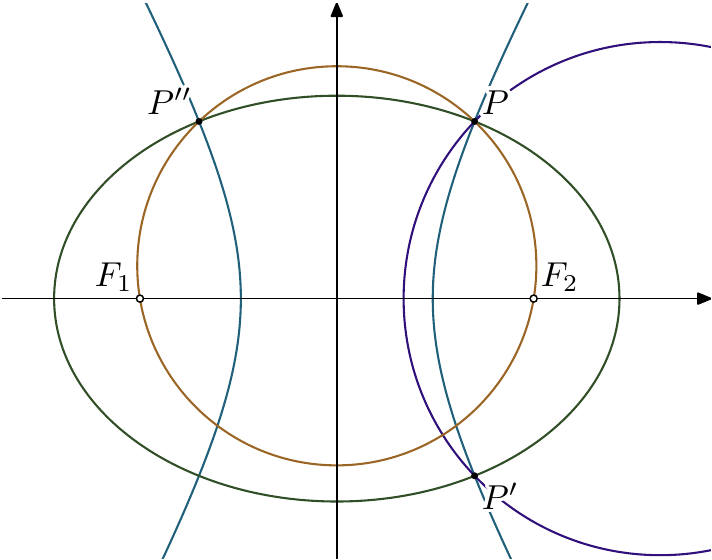}\\
			\f \label{fig: four curves}
		\end{center}
			}
\end{center}

W.~B\"ohm in \cite{bohm1970verwandte} constructed a net consisting of lines touching a conic. 
Quadrilaterals formed by lines of this net can be circumscribed around circles and points of intersection of these lines can be split into families lying on confocal conics.
This construction was rediscovered and generalized by the author and A. Bobenko in \cite{akopyan2016bobenko}, where also it was noticed that B\"ohm's net is a special case of the Poncelet grid introduced and investigated by R.~Schwarz \cite{schwartz2007poncelet}, see also~\cite{levi2007poncelet} for an additional discussion.

\begin{center}
	\parbox{8cm}{
		\begin{center}
			\includegraphics{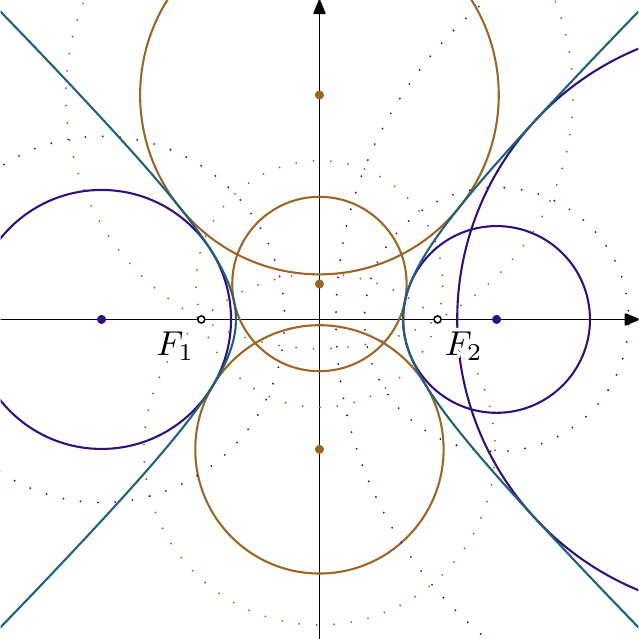}\\
			\f \label{fig:edelsbrunner construction}
		\end{center}
			}
	\parbox{7cm}{
		\begin{center}
			\includegraphics{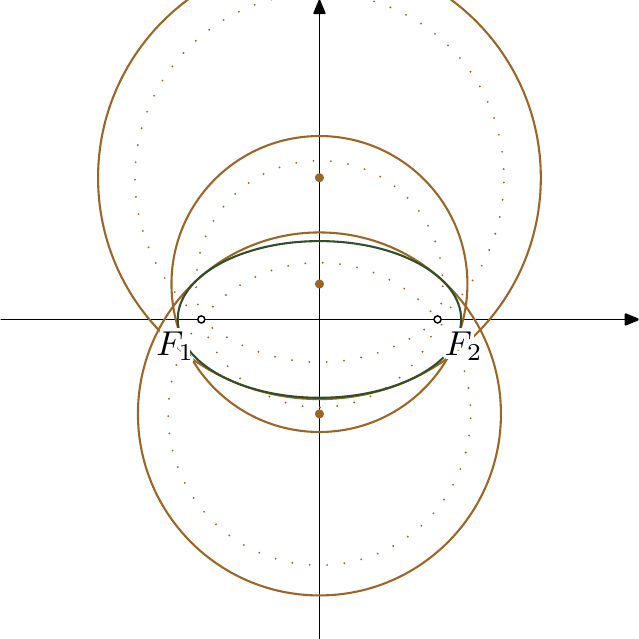}\\
			\f \label{fig:edelsbrunner construction ellipse}
		\end{center}
			}
\end{center}

The current constructions were inspired by the work of H.~Edelsbrunner~\cite{edelsbrunner1999deformable, cheng2001dynamic}, where he invented a new approach for designing smooth surfaces from a set of spheres. 
His method of connecting spheres by circumscribed hyperboloids is based on the following observation.
Let $k_1$, $k_2$ satisfy $k_1^2+k_2^2=1$.
If we scale each elliptic Apollonian circle of points $F_1$ and $F_2$  $k_1$ times and each hyperbolic Apollonian circle $k_2$ with respect to its center, the obtained circles touch a fixed hyperbola with foci $F_1$ and $F_2$.
Figure~\ref{fig:edelsbrunner construction} shows the case $k_1=k_2=\frac{1}{\sqrt 2}$, when the hyperbola is equilateral. 
Note that if $k_1>1$, then scaled elliptic Apollonian circles touch an ellipse with foci $F_1$ and~$F_2$ (Figure~\ref{fig:edelsbrunner construction ellipse}). (The touching points may have complex coordinates.)

Let us fix the notation we will use.
We suppose that foci of pencils have coordinates $F_1= (-1, 0)$ and $F_2=(1, 0)$.
Denote by $a(P)$ and $b(P)$ the distances from point $P$ to foci $F_1$ and $F_2$.
Define the following functions:
\begin{itemize}
\item $f(P)=\displaystyle \frac{a(P)}{b(P)}$. 
The locus of points $f(P)=\const$, is a hyperbolic Apollonian circle corresponding to the points $F_1$ and $F_2$. For points $P\in \mathbb{R}^2_+$ we have $f(P)>1$.

\item $g(P)=\displaystyle \frac{a(P)^2+b(P)^2-4}{2a(P)b(P)}$. 
The locus of points $g(P)=\const \in (0, 1)$, $P\in \mathbb{R}^2_+$, is the upper arc of a circle passing through $F_1$ and $F_2$ (and the arc symmetric to it in $x$-axis). Indeed, for any point $P$ on this arc we have $\cos \angle F_1PF_2=g(P)=\const$.

\item $h(P)=a(P)-b(P)$.
The locus of points $h(P)=\const$ is a branch of a hyperbola with foci $F_1$ and $F_2$.
Note that for $P \in \mathbb{R}^2_+$ we have $0<h(P)<2$.

\item $e(P)=a(P)+b(P)$.
The locus of points $e(P)=\const$ is an ellipse with foci $F_1$ and~$F_2$.
\end{itemize}

Note that $P\in \mathbb{R}^2_+$ is uniquely determined by values of any two function from this list.
For simplicity in the following sections we denote the values of considered functions at point $P$ by $a$, $b$, $f$, $g$, $h$, and $e$.

\section{The web from Apollonian circles and confocal hyperbolas}

In this section we prove that two pencils of Apollonian circles with foci at $F_1$ and $F_2$, and the family of hyperbolas with foci at $F_1$ and $F_2$ form a trivial web in the positive quadrant $\mathbb{R}_+^2$.
Applying the map to vertices of a shifted lattice $k\mathbb{Z}^2+t$ we obtain a configuration shown on~Figure~\ref{fig:cir-cir-hyp}.

\begin{theorem}
	\label{thm:cir-cir-hyp}
	Suppose $\Omega$ be a set of points $\mathbb{R}^2$ with positive $x$-coordinate, and the map $\varphi: \Omega \rightarrow \mathbb{R}_+^2$ is defined in the following way:
	\begin{equation*}
		\label{eq:f_st, cir-cir-hyp}
		\varphi(x,y)=
		\left(
		\frac{
		{\rm e}^{x}\sqrt{1+{\rm e}^y}
		}{
		{\rm e}^{x}+{\rm e}^y},
		\frac{{\rm e}^y\sqrt{{\rm e}^{x}-1}}{{\rm e}^{x}+{\rm e}^y}
		\right).
	\end{equation*}
	Then $\varphi$ is surjective and maps lines $x=\const$ to arcs of circles of the elliptic Apollonian pencil, rays $y=\const$ to arcs of circles of the hyperbolic Apollonian pencil and rays $x-y=\const$ to arcs of hyperbolas with foci $F_1$ and $F_2$.
\end{theorem}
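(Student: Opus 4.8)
The plan is to reduce everything to the four ``coordinate'' functions $a,b,f,g,h$ of the image point and to show that they factor through $x$, $y$, or $x-y$ as appropriate. Write $s=\mathrm{e}^{x}$, $t=\mathrm{e}^{y}$ and $\varphi(x,y)=(u,v)$, so that
\[
u=\frac{s\sqrt{1+t}}{s+t},\qquad v=\frac{t\sqrt{s-1}}{s+t}.
\]
First I would compute $u^{2}+v^{2}$: the numerator $s^{2}(1+t)+t^{2}(s-1)$ factors as $(s+t)(s+st-t)$, so $u^{2}+v^{2}=\frac{s+st-t}{s+t}$. Adding $\pm 2u+1$ and simplifying, the decisive observation is that $a^{2}=(u+1)^{2}+v^{2}$ and $b^{2}=(u-1)^{2}+v^{2}$ turn into perfect squares:
\[
a^{2}=\frac{s\,(\sqrt{1+t}+1)^{2}}{s+t},\qquad b^{2}=\frac{s\,(\sqrt{1+t}-1)^{2}}{s+t}.
\]
Since $t>0$ forces $\sqrt{1+t}>1$, extracting positive roots gives the clean closed forms $a=\sqrt{s/(s+t)}\,(\sqrt{1+t}+1)$ and $b=\sqrt{s/(s+t)}\,(\sqrt{1+t}-1)$. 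In passing, $u>0$ and, because $s>1$ on $\Omega$, $v>0$, so $\varphi(\Omega)\subseteq\mathbb{R}^{2}_{+}$ and the images sit on the upper arcs.

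From here the three curve statements are immediate algebra. One gets $f=a/b=\frac{\sqrt{1+\mathrm{e}^{y}}+1}{\sqrt{1+\mathrm{e}^{y}}-1}$, a function of $y$ alone, so every ray $y=\const$ is carried into a single level set $f=\const$, i.e. a hyperbolic Apollonian circle. Next $h=a-b=2\sqrt{s/(s+t)}=\frac{2}{\sqrt{1+\mathrm{e}^{\,y-x}}}$, a function of $x-y$ alone, so each ray $x-y=\const$ goes into a level set $h=\const$, a branch of a confocal hyperbola. Finally, using $a^{2}+b^{2}=\frac{2s(t+2)}{s+t}$ and $ab=\frac{st}{s+t}$, one obtains $g=\frac{a^{2}+b^{2}-4}{2ab}=\frac{s-2}{s}=1-2\mathrm{e}^{-x}$, a function of $x$ alone, so each line $x=\const$ is mapped into a level set $g=\const$, an arc of a circle of the elliptic Apollonian pencil.

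For surjectivity I would invert these relations directly. Given $Q\in\mathbb{R}^{2}_{+}$, put $f_{0}=f(Q)$ and $g_{0}=g(Q)$; by the discussion preceding the theorem $f_{0}>1$, and $g_{0}=\cos\angle F_{1}QF_{2}\in(-1,1)$ since $Q$ does not lie on the line $F_{1}F_{2}$. Then $x:=\ln\frac{2}{1-g_{0}}$ is well defined and positive and $y:=\ln\frac{4f_{0}}{(f_{0}-1)^{2}}$ is a well-defined real number, and the closed forms above show that $\varphi(x,y)$ has exactly the values $f_{0}$ and $g_{0}$ of $f$ and $g$. Since a point of $\mathbb{R}^{2}_{+}$ is determined by any two of the functions $a,b,f,g,h,e$, this forces $\varphi(x,y)=Q$, so $\varphi$ is surjective.

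I expect the only genuinely non-routine step to be the factorization that makes $(u\pm1)^{2}+v^{2}$ into perfect squares; once that identity is spotted, the identification of the three curve families and the inversion of the map are both straightforward, and the ranges $f\in(1,\infty)$, $g\in(-1,1)$, $h\in(0,2)$ simply reflect the monotone dependence on $y$, $x$, $x-y$ recorded above.
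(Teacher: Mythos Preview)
Your proof is correct and follows essentially the same route as the paper: compute $a$ and $b$ at the image point, observe the perfect-square factorization of $(u\pm1)^2+v^2$, and read off that $f$, $g$, $h$ depend only on $y$, $x$, $x-y$ respectively, obtaining the identical closed forms $f=\frac{\sqrt{1+\mathrm e^{y}}+1}{\sqrt{1+\mathrm e^{y}}-1}$, $g=1-2\mathrm e^{-x}$, $h=2\sqrt{\mathrm e^{x-y}/(\mathrm e^{x-y}+1)}$. The only difference is cosmetic: your substitution $s=\mathrm e^{x}$, $t=\mathrm e^{y}$ streamlines the algebra, and your surjectivity argument is more explicit (you actually invert $g=1-2\mathrm e^{-x}$ and $f=\frac{\sqrt{1+\mathrm e^{y}}+1}{\sqrt{1+\mathrm e^{y}}-1}$), whereas the paper simply remarks that varying $x$ and $y$ produces all Apollonian circles.
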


\begin{proof}
	For the proof we need to show that the value of $f$ depends only on $y$, $g$ only on~$x$, and $h$ only on $x-y$.
	We start the calculation from finding values of $a$ and $b$.
	Suppose the point $P$ has coordinates $(s,t)$ and it is an image of the point $(x,y)$ under the map $\varphi$.
	Then $a^2=(s+1)^2+t^2$ and $b^2=(s-1)^2+t^2$. Therefore
	\begin{multline*}
	a^2=\frac{
	({\rm e}^{x}+{\rm e}^y+{\rm e}^{x}\sqrt{1+{\rm e}^y})^2+{\rm e}^{x+2y}-{\rm e}^{2y}
	}
	{
	({\rm e}^{x}+{\rm e}^y)^2
	} =\\
	\frac{
		({\rm e}^{x}+{\rm e}^y)({\rm e}^{x}+{\rm e}^y+2{\rm e}^{x}\sqrt{1+{\rm e}^y})+{\rm e}^{2x} +{\rm e}^{2x+y}+{\rm e}^{x+2y}-{\rm e}^{2y}
	}
	{
		({\rm e}^{x}+{\rm e}^y)^2
	} =\\
	\frac{
		{\rm e}^{x}({\rm e}^{x}+{\rm e}^y)(1+2\sqrt{1+{\rm e}^y})+
		({\rm e}^{x}+{\rm e}^{y}){\rm e}^{y}+({\rm e}^{x}+{\rm e}^y)({\rm e}^{x+y}+{\rm e}^{x} -{\rm e}^y)
	}
	{
		({\rm e}^{x}+{\rm e}^y)^2
	} =\\
	\frac{
	{\rm e}^{x}
	}
	{
	({\rm e}^{x}+{\rm e}^y)
	}
	\left(1+2\sqrt{1+{\rm e}^y}+1+{\rm e}^y\right)
	=
	\frac{
		{\rm e}^{x}
	}
	{
		{\rm e}^{x}+{\rm e}^y
	}
	\left(\sqrt{1+{\rm e}^y} +1\right)^2.
	\end{multline*}
	
	Analogously
	\begin{equation*}
	b=\sqrt{
		\frac{
			{\rm e}^{x}
		}
		{
			{\rm e}^{x}+{\rm e}^y
		}
	}
	\left(\sqrt{1+{\rm e}^y} -1\right).
	\end{equation*}

	Now let us compute $f$, $g$, and $h$.
	
	\begin{equation*}
	f=\frac{a}{b}=\frac{\sqrt{1+{\rm e}^y} +1}{\sqrt{1+{\rm e}^y} -1}.
	\end{equation*}
	\begin{multline*}
	g=\frac{a^2+b^2-4}{2ab}= 
	\frac{
	\frac{
		{\rm e}^{x}
	}
	{
		{\rm e}^{x}+{\rm e}^y
	}
		\left(
		(\sqrt{1+{\rm e}^y} +1)^2+(\sqrt{1+{\rm e}^y} -1)^2
		\right)-4
	}
	{
	2\frac{
		{\rm e}^{x}
	}
	{
		{\rm e}^{x}+{\rm e}^y
	}
		\left(
		(\sqrt{1+{\rm e}^y}+1)(\sqrt{1+{\rm e}^y}-1)
		\right)
	}=\\
	\frac{
		{\rm e}^{x}(2{\rm e}^y+4)-4({\rm e}^{x}+{\rm e}^y)
	}{
	2{\rm e}^{x} {\rm e}^{y}
	}=1-2{\rm e}^{-x}.
	\end{multline*}
	
	\begin{equation*}
	h=a-b=\sqrt{
	\frac{
		{\rm e}^{x}
	}
	{
		{\rm e}^{x}+{\rm e}^y
	}
	}
	\left(
	(\sqrt{1+{\rm e}^y} +1)-(\sqrt{1+{\rm e}^y} -1)
	\right)=
	2\sqrt{\frac{
	{\rm e}^{x-y}
	}
	{
	{\rm e}^{x-y}+1
	}}.
	\end{equation*}
	
	The surjectivity of the map easily follows from the fact that varying $x$ and $y$ we can obtain all circles from Apollonian pencils.
\end{proof}

\section{The web from Apollonian circles and confocal ellipses}

We prove that two pencils of Apollonian circles with foci at $F_1$ and $F_2$, and the family of ellipses with foci at $F_1$ and $F_2$ form a trivial web in the positive quadrant $\mathbb{R}_+^2$ (Figure~\ref{fig:cir-cir-el}).
\begin{theorem}
	\label{thm:cir-cir-el}
	Suppose $\Omega$ is the forth quadrant, the set of points in the plane with positive $x$-coordinate and negative $y$-coordinate. The map $\varphi: \Omega \rightarrow \mathbb{R}_+^2$ is defined in the following way:
	\begin{equation*}
		\label{eq:v_st, cir-cir-el}
	\varphi(x,y)=
	\left(
	\frac{{\rm e}^{x}\sqrt{1-{\rm e}^{y}}}{{\rm e}^{x}-{\rm e}^{y}},
	\frac{{\rm e}^{y}\sqrt{{\rm e}^{x}-1}}{{\rm e}^{x}-{\rm e}^{y}}
	\right).
	\end{equation*}
	Then $\varphi$ is surjective and maps rays $x=\const$ to arcs of circles of the elliptic Apollonian pencil, rays $y=\const$ to arcs of circles of the hyperbolic Apollonian pencil and rays $x-y=\const$ to arcs of ellipses with foci $F_1$ and $F_2$.
\end{theorem}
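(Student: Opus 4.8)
The plan is to mimic exactly the proof of Theorem~\ref{thm:cir-cir-hyp}. We must show that for a point $P=\varphi(x,y)$ with $(x,y)\in\Omega$, the value of $f$ depends only on $y$, the value of $g$ depends only on $x$, and the value of $e$ depends only on $x-y$. Since a point of $\mathbb{R}^2_+$ is determined by any two of the functions $a,b,f,g,h,e$, this yields that $x=\const$, $y=\const$, and $x-y=\const$ are mapped respectively into hyperbolic Apollonian circles, elliptic Apollonian circles, and confocal ellipses. As before, the computational engine is to first obtain clean closed forms for $a=a(P)$ and $b=b(P)$ using $a^2=(s+1)^2+t^2$, $b^2=(s-1)^2+t^2$ where $(s,t)=\varphi(x,y)$.

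First I would substitute the coordinates. Writing $D={\rm e}^x-{\rm e}^y$ (note $D>0$ on $\Omega$, since ${\rm e}^x>1>{\rm e}^y$ there), we have $s=\dfrac{{\rm e}^x\sqrt{1-{\rm e}^y}}{D}$ and $t=\dfrac{{\rm e}^y\sqrt{{\rm e}^x-1}}{D}$. Expanding $a^2=(s+1)^2+t^2=s^2+t^2+2s+1$, the $s^2+t^2$ part gives $\dfrac{{\rm e}^{2x}(1-{\rm e}^y)+{\rm e}^{2y}({\rm e}^x-1)}{D^2}$; the numerator here should collapse (after grouping the ${\rm e}^{2x}$, ${\rm e}^{x+2y}$, $-{\rm e}^{2x+y}$, $-{\rm e}^{2y}$ terms and factoring out $D={\rm e}^x-{\rm e}^y$) to something proportional to $D$, leaving $s^2+t^2=\dfrac{{\rm e}^{x+y}-{\rm e}^{2y}+\dots}{D}$ — in any case the key point, matching the hyperbola case, is that everything regroups so that
\[
a=\sqrt{\frac{{\rm e}^x}{{\rm e}^x-{\rm e}^y}}\,\bigl(1+\sqrt{1-{\rm e}^y}\bigr),
\qquad
b=\sqrt{\frac{{\rm e}^x}{{\rm e}^x-{\rm e}^y}}\,\bigl(1-\sqrt{1-{\rm e}^y}\bigr),
\]
the sign being forced by $b\geq 0$ and $1-\sqrt{1-{\rm e}^y}\geq 0$. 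I expect the algebra to be essentially identical to the hyperbola computation with the single substitution ${\rm e}^y\mapsto -{\rm e}^y$ (equivalently $\sqrt{1+{\rm e}^y}\mapsto\sqrt{1-{\rm e}^y}$ and ${\rm e}^x+{\rm e}^y\mapsto{\rm e}^x-{\rm e}^y$), which is precisely why the author chose this form of $\varphi$.

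With these formulas in hand the three verifications are immediate. For $f=a/b$ one gets $f=\dfrac{1+\sqrt{1-{\rm e}^y}}{1-\sqrt{1-{\rm e}^y}}$, a function of $y$ alone (and $f>1$ since $0<\sqrt{1-{\rm e}^y}<1$ on $\Omega$, so the image does land in $\mathbb{R}^2_+$). For $g$, note $a^2+b^2=\dfrac{2{\rm e}^x(2-{\rm e}^y)}{{\rm e}^x-{\rm e}^y}$ and $2ab=\dfrac{2{\rm e}^x{\rm e}^y}{{\rm e}^x-{\rm e}^y}$, hence $g=\dfrac{a^2+b^2-4}{2ab}$ simplifies — the $-4$ combining with $\dfrac{2{\rm e}^x(2-{\rm e}^y)}{{\rm e}^x-{\rm e}^y}$ over the common denominator to cancel the ${\rm e}^y$-dependence — to $1-2{\rm e}^{-x}$, a function of $x$ alone (and this lies in $(0,1)$ iff ${\rm e}^x>2$; one should remark how this restricts or matches the range, exactly as in the previous theorem the hyperbola arc was an ``upper arc''). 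Finally $e=a+b=2\sqrt{\dfrac{{\rm e}^x}{{\rm e}^x-{\rm e}^y}}=2\sqrt{\dfrac{1}{1-{\rm e}^{y-x}}}=2\bigl(1-{\rm e}^{-(x-y)}\bigr)^{-1/2}$, a function of $x-y$ alone, and $e>2$ as required for a genuine ellipse with foci $F_1,F_2$. Surjectivity then follows exactly as in Theorem~\ref{thm:cir-cir-hyp}: as $x$ ranges over its allowed interval and $y$ over its, the pairs $(f,g)$ (equivalently, the pairs of Apollonian circles) sweep out all of $\mathbb{R}^2_+$.

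The only genuine obstacle is bookkeeping: one must correctly identify the domain $\Omega$ (the fourth quadrant, ensuring ${\rm e}^x>1$, ${\rm e}^y<1$, hence $D>0$ and $1-{\rm e}^y>0$) and then check that the resulting ranges of $f$, $g$, $e$ are exactly the ranges over which those loci are the desired arcs — in particular confirming which arc of each Apollonian circle and which portion of each ellipse appears, so that ``surjective onto $\mathbb{R}^2_+$'' is honestly justified rather than merely ``locally a diffeomorphism.'' The sign choices in extracting $a$ and $b$ from $a^2$, $b^2$ must also be pinned down using positivity on $\Omega$; everything else is the same polynomial identity manipulation already carried out in Section~1.
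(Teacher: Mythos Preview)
Your approach is exactly the paper's: compute $a$ and $b$ in closed form, then read off $f$, $g$, $e$ as functions of $y$, $x$, $x-y$ alone. Two small slips to fix: first, in the sentence after ``this yields that\ldots'' you have swapped the pencils (it is $x=\const$ that gives elliptic Apollonian circles via $g$, and $y=\const$ that gives hyperbolic ones via $f$, matching the theorem statement); second, your value of $g$ has the wrong sign --- carrying out the simplification you indicate gives $g=\dfrac{2{\rm e}^y(2-{\rm e}^x)}{2{\rm e}^{x+y}}=2{\rm e}^{-x}-1$, not $1-2{\rm e}^{-x}$ (the substitution ${\rm e}^y\mapsto -{\rm e}^y$ from Theorem~\ref{thm:cir-cir-hyp} flips this sign), though of course this does not affect the conclusion that $g$ depends on $x$ alone.
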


\begin{proof}
	For the proof we need to show that the value of $f$ depends only on $y$, $g$ only on $x$, and $e$ only on $x-y$.
	As in the proof of Theorem~\ref{thm:cir-cir-hyp}, we start from the calculation of $a$ and $b$.
	\begin{multline*}
	a^2=\frac{
	({\rm e}^{x}-{\rm e}^{y}+{\rm e}^{x}\sqrt{1-{\rm e}^{y}})^2+{\rm e}^{2y+x}-{\rm e}^{2y}
	}
	{
	({\rm e}^{x+y}-{\rm e}^{y})^2
	} =\\
	\frac{
		({\rm e}^{x}-{\rm e}^{y})({\rm e}^{x}-{\rm e}^{y}+2{\rm e}^{x}\sqrt{1-{\rm e}^{y}})+
		{\rm e}^{2x} - {\rm e}^{2x+y}+{\rm e}^{x+2y}-{\rm e}^{2y}
	}
	{
	({\rm e}^{x}-{\rm e}^{y})^2
	} =\\
	\frac{
		{\rm e}^{x}({\rm e}^{x}-{\rm e}^{y})(1+2\sqrt{1-{\rm e}^{y}})-({\rm e}^{x}-{\rm e}^{y}){\rm e}^{y}  +({\rm e}^{x}-{\rm e}^{y})({\rm e}^{x} -{\rm e}^{x+y}+{\rm e}^{y})
	}
	{
	({\rm e}^{x}-{\rm e}^{y})^2
	} =\\
	\frac{
	{\rm e}^{x}
	}
	{
	{\rm e}^{x}-{\rm e}^{y}
	}
	\left(1+2\sqrt{1-{\rm e}^{y}}+1-{\rm e}^{y}\right)
	=
	\frac{
	{\rm e}^{x}
	}
	{
	{\rm e}^{x}-{\rm e}^{y}
	}
	\left(1+\sqrt{1-{\rm e}^{y}}  \right)^2.
	\end{multline*}
	
	Analogously
	\begin{equation*}
	b=\sqrt{\frac{
	{\rm e}^{x}
	}
	{
	{\rm e}^{x}-{\rm e}^{y}
	}}
	\left(1-\sqrt{1-{\rm e}^{y}}\right).
	\end{equation*}

	Now let us compute $f$, $g$ and $e$.
	
	\begin{equation*}
	f=\frac{a}{b}=\frac{1+\sqrt{1-{\rm e}^{y}}}{1-\sqrt{1-{\rm e}^{-y}} }.
	\end{equation*}
	\begin{multline*}
	g=\frac{a^2+b^2-4}{2ab}= 
	\frac{
		\frac{
				{\rm e}^{x}
			}
			{
				{\rm e}^{x}-{\rm e}^{y}
			}
		\left(
		(1+\sqrt{1-{\rm e}^{y}} )^2+(1-\sqrt{1-{\rm e}^{y}})^2
		\right)-4
	}
	{
	2\frac{
		{\rm e}^{x}
		}
		{
		{\rm e}^{x}-{\rm e}^{y}
		}
		\left(
		(1+\sqrt{1-{\rm e}^{y}} )(1-\sqrt{1-{\rm e}^{y}})
		\right)
	}=\\
	\frac{
		{\rm e}^{x}(4-2{\rm e}^{y})-4({\rm e}^{x}-{\rm e}^{y})
	}{
	2{\rm e}^{x} {\rm e}^{y}
	}=2{\rm e}^{-x}-1.
	\end{multline*}
	
	\begin{equation*}
	e=a+b=
	\sqrt{
		\frac{
			{\rm e}^{x}
		}
		{
			{\rm e}^{x}-{\rm e}^{y}
		}
	}
	\left(
	(1+\sqrt{{\rm e}^y+1})+(1-\sqrt{{\rm e}^y+1})
	\right)=
	2\sqrt{
		\frac{
			{\rm e}^{x-y}
		}
		{
			{\rm e}^{x-y}-1.
		}
	}
	\end{equation*}
	
	Again, the surjectivity of the map easily follows from the fact that varying $x$ and $y$ we can obtain all circles from Apollonian pencils.
\end{proof}

\section{The web from confocal conics and a hyperbolic Apollonian pencil}
In this section we prove that there is a trivial web formed by confocal ellipses and hyperbolas, and the hyperbolic Apollonian pencil.
The family of vertical lines can be adjoint to this web in a very natural way (Figure~\ref{fig:hip-el-ap}).

\begin{theorem}
	\label{thm:hip-el-ap}
	Suppose $\Omega$ is the forth quadrant and the map $\varphi: \Omega \rightarrow \mathbb{R}_+^2$ is defined in the following way:
	\begin{equation*}
		\label{eq:f_st, hip-el-ap}
		\varphi(x,y)=
		\left(
		{\rm e}^{x+y},
		\sqrt{
		({\rm e}^{2x}-1)(1-{\rm e}^{2y})
		}
		\right).
	\end{equation*}
	Then $\varphi$ is surjective and maps rays $x=\const$ and $y=\const$ to arcs of ellipses and hyperbolas with foci $F_1$ and $F_2$, rays $x-y=\const$ to arcs of circles of the hyperbolic Apollonian pencil with foci $F_1$ and $F_2$, and intervals $x+y=\const$ to vertical lines.
\end{theorem}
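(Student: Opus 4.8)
The plan is to follow the same strategy as in the proofs of Theorems~\ref{thm:cir-cir-hyp} and~\ref{thm:cir-cir-el}: write $P=(s,t)=\varphi(x,y)$ with $s={\rm e}^{x+y}$ and $t^{2}=({\rm e}^{2x}-1)(1-{\rm e}^{2y})$, and then express the functions $e$, $h$, $f$ and the coordinate $s$ in terms of $x,y$. The only real computation is that of $a$ and $b$, and here it is cleaner to go through the symmetric combinations $a^{2}\pm b^{2}$ rather than through $a^{2}$ directly. Namely, $a^{2}-b^{2}=(s+1)^{2}-(s-1)^{2}=4s=4{\rm e}^{x+y}$, while $a^{2}+b^{2}=2(s^{2}+t^{2}+1)$; expanding $t^{2}={\rm e}^{2x}-{\rm e}^{2x+2y}-1+{\rm e}^{2y}$ the cross terms cancel and one obtains the simple identity $a^{2}+b^{2}=2({\rm e}^{2x}+{\rm e}^{2y})$.

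Adding and subtracting these two relations gives $a^{2}=({\rm e}^{2x}+{\rm e}^{2y})+2{\rm e}^{x+y}=({\rm e}^{x}+{\rm e}^{y})^{2}$ and $b^{2}=({\rm e}^{x}-{\rm e}^{y})^{2}$. Since on $\Omega$ we have $x>0>y$, both ${\rm e}^{x}+{\rm e}^{y}$ and ${\rm e}^{x}-{\rm e}^{y}$ are positive, so $a={\rm e}^{x}+{\rm e}^{y}$ and $b={\rm e}^{x}-{\rm e}^{y}$. From this everything follows at once: $e=a+b=2{\rm e}^{x}$ depends only on $x$, hence $x=\const$ maps to a confocal ellipse; $h=a-b=2{\rm e}^{y}\in(0,2)$ depends only on $y$, hence $y=\const$ maps to a branch of a confocal hyperbola; $f=a/b=\dfrac{{\rm e}^{x-y}+1}{{\rm e}^{x-y}-1}$ depends only on $x-y$, hence $x-y=\const$ maps to a circle of the hyperbolic Apollonian pencil; and the first coordinate of $\varphi(x,y)$ is $s={\rm e}^{x+y}$, depending only on $x+y$, hence $x+y=\const$ maps to an interval of a vertical line.

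For surjectivity I would use that a point $P\in\mathbb{R}^{2}_{+}$ is uniquely determined by the pair $(e,h)$, and that $(e,h)$ ranges over exactly $(2,\infty)\times(0,2)$ for points of $\mathbb{R}^{2}_{+}$; since $(e,h)=(2{\rm e}^{x},2{\rm e}^{y})$ sweeps out precisely this rectangle as $(x,y)$ ranges over $\Omega$, the map $\varphi$ is onto (in fact a bijection onto $\mathbb{R}^{2}_{+}$). I do not expect a genuine obstacle here: the one point requiring care is the algebraic cancellation yielding $a^{2}+b^{2}=2({\rm e}^{2x}+{\rm e}^{2y})$, the recognition of the two resulting perfect squares, and checking the signs of the square roots on $\Omega$ so that $a$ and $b$ are extracted correctly.
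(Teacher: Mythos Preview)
Your proposal is correct and follows essentially the same approach as the paper: both compute $a={\rm e}^{x}+{\rm e}^{y}$ and $b={\rm e}^{x}-{\rm e}^{y}$ and read off $e$, $h$, $f$ and the vertical-line property from these. The only cosmetic difference is that the paper expands $a^{2}=(s+1)^{2}+t^{2}$ and $b^{2}=(s-1)^{2}+t^{2}$ directly, while you pass through the symmetric combinations $a^{2}\pm b^{2}$; your surjectivity argument via the range of $(e,h)$ is also a slightly more explicit version of the paper's one-line remark.
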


\begin{proof}
	In this case $a$ and $b$ have a very nice formulas:
	\begin{equation*}
	a=\sqrt{(1+2{\rm e}^{x+y}+{\rm e}^{2x+2y})+({\rm e}^{2x}-{\rm e}^{2x+2y}+{\rm e}^{2y}-1)}=({\rm e}^{x}+{\rm e}^{y}),
	\end{equation*}
	\begin{equation*}
	b=\sqrt{(1-2{\rm e}^{x+y}+{\rm e}^{2x+2y})+({\rm e}^{2x}-{\rm e}^{2x+2y}+{\rm e}^{2y}-1)}=({\rm e}^{x}-{\rm e}^{y}).
	\end{equation*}
%
%


We see that the values $e=a+b=2{\rm e}^x$ and $h=a-b=2{\rm e}^{y}$ defined only by $x$ and $y$, respectively, it is left to check that the ratio $a/b$ depends only on $x-y$:
\begin{equation*}
f=\frac{a}{b} = 
\frac{{\rm e}^x+{\rm e}^{y}}{{\rm e}^x-{\rm e}^{y}} =
\frac{{\rm e}^{\frac{x-y}{2} }+{\rm e}^{\frac{y-x}{2}}}{{\rm e}^{\frac{x-y}{2} }-{\rm e}^{\frac{{y-x}}{2}}} = \coth \left(\frac{x-y}{2} \right).
\end{equation*}

The fact that image of points with $x+y=\const$ is a vertical lines follows directly form the definition of $\varphi$.
Since varying $x$ and $y$ we can obtain all ellipses and hyperbolas from the confocal family the image of $\varphi$ is the whole quadrangle $\mathbb{R}_+^2$.
\end{proof}

\section{The web from confocal conics and an elliptic Apollonian pencil}
The last example of a trivial web is formed by confocal ellipses and hyperbolas, and and elliptic Apollonian pencil, and family of horisontal lines, which can be adjoint to the web as well (Figure~\ref{fig:hip-ell-pas}).

\begin{theorem}
	\label{thm:hip-ell-pas}
	Suppose $\Omega$ is the forth quadrant and the map $\varphi: \Omega \rightarrow \mathbb{R}_+^2$ is defined in the following way:
		\begin{equation*}
		\label{eq:f_st, hip-ell-pas}
		\varphi(x,y)=
		\left(
		\sqrt{(1+{\rm e}^x)(1-{\rm e}^{y})},
		\sqrt{{\rm e}^{x+y}}
		\right).
	\end{equation*}
	Then $\varphi$ is surjective and maps rays $x=\const$ and $y=\const$ to arcs of ellipses and hyperbolas with foci $F_1$ and $F_2$, rays $x-y=\const$ to arcs of circles of the elliptic Apollonian pencil with foci $F_1$ and $F_2$, and intervals $x+y=\const$ to horizontal lines.
\end{theorem}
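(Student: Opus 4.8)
The plan is to copy, almost verbatim, the argument used for Theorems~\ref{thm:cir-cir-hyp}--\ref{thm:hip-el-ap}. Write $P=\varphi(x,y)=(s,t)$ with
\[
s=\sqrt{(1+{\rm e}^x)(1-{\rm e}^y)},\qquad t=\sqrt{{\rm e}^{x+y}},
\]
compute the focal distances $a=a(P)$ and $b=b(P)$ as explicit functions of $x$ and $y$, and then verify the four statements: that $e$ depends only on $x$, that $h$ depends only on $y$, that $g$ depends only on $x-y$, and that the vertical coordinate $t$ of $P$ is a function of $x+y$ alone. Since, by the remark closing the introduction, a point of $\mathbb{R}^2_+$ is determined by any two of the quantities $a,b,e,f,g,h$, these four facts identify the images of the four pencils of lines $x=\const$, $y=\const$, $x-y=\const$, $x+y=\const$.

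For the computation I would start from $s^2+t^2=(1+{\rm e}^x)(1-{\rm e}^y)+{\rm e}^{x+y}=1+{\rm e}^x-{\rm e}^y$, which gives
\[
a^2=(s+1)^2+t^2=2+{\rm e}^x-{\rm e}^y+2\sqrt{(1+{\rm e}^x)(1-{\rm e}^y)}=\bigl(\sqrt{1+{\rm e}^x}+\sqrt{1-{\rm e}^y}\bigr)^{2},
\]
and likewise $b^2=\bigl(\sqrt{1+{\rm e}^x}-\sqrt{1-{\rm e}^y}\bigr)^{2}$. As $y<0$ we have $0<1-{\rm e}^y<1<1+{\rm e}^x$, so the radicands are positive and $\sqrt{1+{\rm e}^x}>\sqrt{1-{\rm e}^y}$; hence $a=\sqrt{1+{\rm e}^x}+\sqrt{1-{\rm e}^y}$ and $b=\sqrt{1+{\rm e}^x}-\sqrt{1-{\rm e}^y}$. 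Now $e=a+b=2\sqrt{1+{\rm e}^x}$ depends only on $x$ and $h=a-b=2\sqrt{1-{\rm e}^y}$ only on $y$, which settles the rays $x=\const$ (confocal ellipses) and $y=\const$ (branches of confocal hyperbolas). For the elliptic Apollonian pencil I would use $g$: from $a^2+b^2-4=2({\rm e}^x-{\rm e}^y)$ and $2ab=2({\rm e}^x+{\rm e}^y)$,
\[
g=\frac{a^2+b^2-4}{2ab}=\frac{{\rm e}^x-{\rm e}^y}{{\rm e}^x+{\rm e}^y}=\tanh\!\left(\frac{x-y}{2}\right),
\]
a function of $x-y$ alone, so rays $x-y=\const$ are sent to arcs of circles through $F_1$ and $F_2$. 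Finally, the second coordinate of $\varphi$ is exactly ${\rm e}^{(x+y)/2}$, so the lines $x+y=\const$ go to horizontal lines, directly from the formula and with no computation.

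Surjectivity would follow as in the earlier theorems: the pair $(e,h)$ determines $P\in\mathbb{R}^2_+$, and the identities $e^2+h^2=2(a^2+b^2)$ and $e^2-h^2=4ab$ show that the value of $g$ forced by a prescribed $(e,h)$ automatically coincides with $\tanh\frac{x-y}{2}$ for the corresponding $(x,y)$, so every target point is attained. The only point I would be careful about is the exact domain: $e=2\sqrt{1+{\rm e}^x}$ runs through the full range $(2,\infty)$ of focal sums of ellipses meeting $\mathbb{R}^2_+$ precisely when $x$ ranges over all of $\mathbb{R}$ (keeping $x>0$ yields only $e>2\sqrt2$), so the natural choice is $\Omega=\{(x,y):y<0\}$, the open lower half-plane --- and the inequality $y<0$ is exactly what makes $1-{\rm e}^y>0$, hence guarantees that $\varphi(x,y)$ is real and lands in the positive quadrant. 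I do not expect a genuine obstacle here: the one nonroutine step is spotting the perfect-square form of $a^2$ and $b^2$ and fixing the sign of $b$, after which the formulas are, if anything, cleaner than in Theorems~\ref{thm:cir-cir-hyp} and~\ref{thm:cir-cir-el}.
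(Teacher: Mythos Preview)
Your argument is correct and essentially identical to the paper's: the same perfect-square factorization of $a^2$ and $b^2$, the same formulas $e=2\sqrt{1+{\rm e}^x}$, $h=2\sqrt{1-{\rm e}^y}$ (the paper in fact prints $2\sqrt{1+{\rm e}^y}$, an evident typo), $g=\tanh\frac{x-y}{2}$, and the same direct appeal to the second coordinate for the horizontal lines. Your remark about the domain is also well taken and goes slightly beyond the paper: with $\Omega$ equal to the fourth quadrant one only gets $e>2\sqrt2$, so surjectivity onto all of $\mathbb{R}^2_+$ genuinely requires allowing $x\in\mathbb{R}$, a point the paper's one-line surjectivity argument passes over.
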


\begin{proof}
	Again, find the formulas for $a$ and $b$.
	\begin{multline*}
	a^2=(\sqrt{(1+{\rm e}^x)(1-{\rm e}^{y})}+1)^2+{\rm e}^{x+y}=\\
	1+2\sqrt{(1+{\rm e}^x)(1-{\rm e}^{y})}+1+{\rm e}^x-{\rm e}^{y}=
	\left(\sqrt{1+{\rm e}^x}+\sqrt{1-{\rm e}^{y}}\right)^2.
	\end{multline*}
	Analogously 
	\begin{multline*}
	b^2=(\sqrt{(1+{\rm e}^x)(1-{\rm e}^{y})}-1)^2+{\rm e}^{x+y}=\\
	1-2\sqrt{(1+{\rm e}^x)(1-{\rm e}^{y})}+1+{\rm e}^x-{\rm e}^{y}=
	\left(\sqrt{1+{\rm e}^x}-\sqrt{1-{\rm e}^{y}}\right)^2.
	\end{multline*}
	
	We have $e=a+b=2\sqrt{1+{\rm e}^x}$ and $h=a-b=2\sqrt{1+{\rm e}^{y}}$.
	Let us calculate the value of~$g$.
	\begin{equation*}
	g=\frac{
		(\sqrt{1+{\rm e}^x}+\sqrt{1-{\rm e}^{y}})^2+(\sqrt{1+{\rm e}^x}-\sqrt{1-{\rm e}^{y}})^2-4
	}{
		2(\sqrt{1+{\rm e}^x}+\sqrt{1-{\rm e}^{y}})(\sqrt{1+{\rm e}^x}-\sqrt{1-{\rm e}^{y}})
	} =\\
	\frac{
	{\rm e}^x-{\rm e}^{y}
	}{
	{\rm e}^x+{\rm e}^{y}
	}=
	\tanh \left( \frac{x-y}{2} \right).
	\end{equation*}
	We see, that it depends only on $x-y$. 
	It is left to notice that from the definition of $\varphi$, follows that images of intervals $x+y=const$ are horizontal lines.
	The map is surjective by the same arguments as in the previous theorem.
	
	%
	%
\end{proof}

%



\begin{center}
	\includegraphics{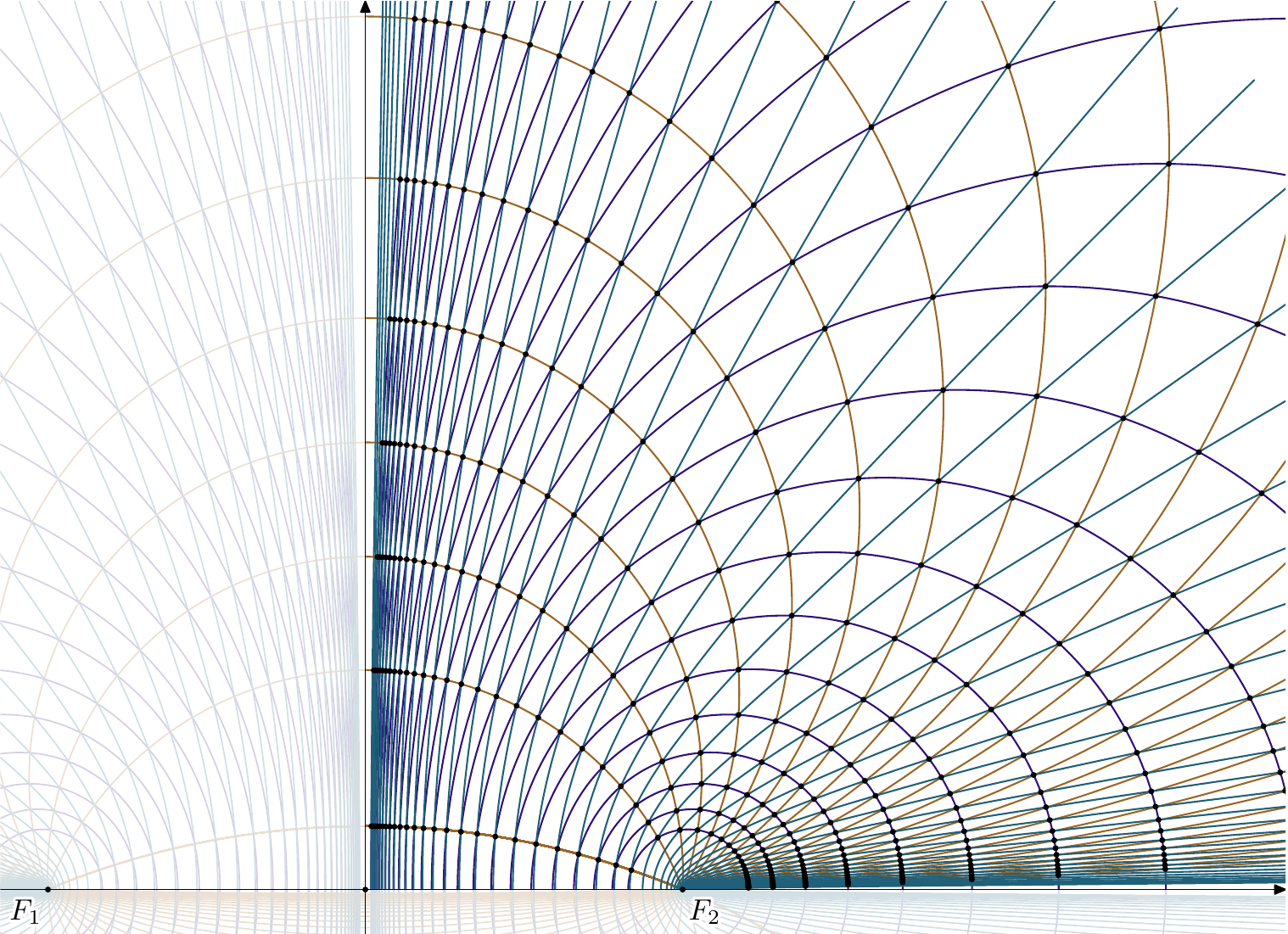}\\
	\f \label{fig:cir-cir-hyp}
\end{center}

\begin{center}
\includegraphics{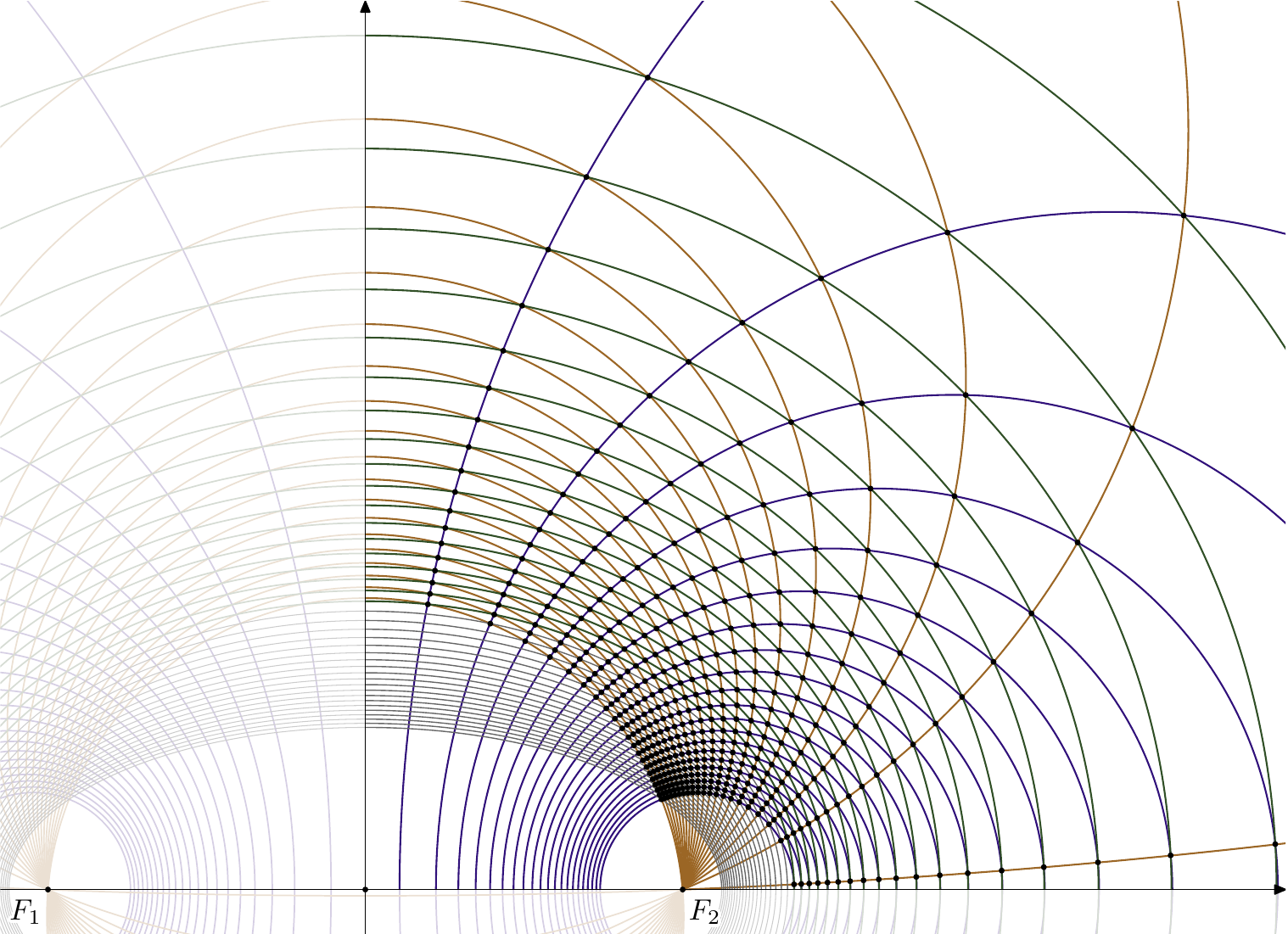}

\f \label{fig:cir-cir-el}
\end{center}

\begin{center}
	\includegraphics{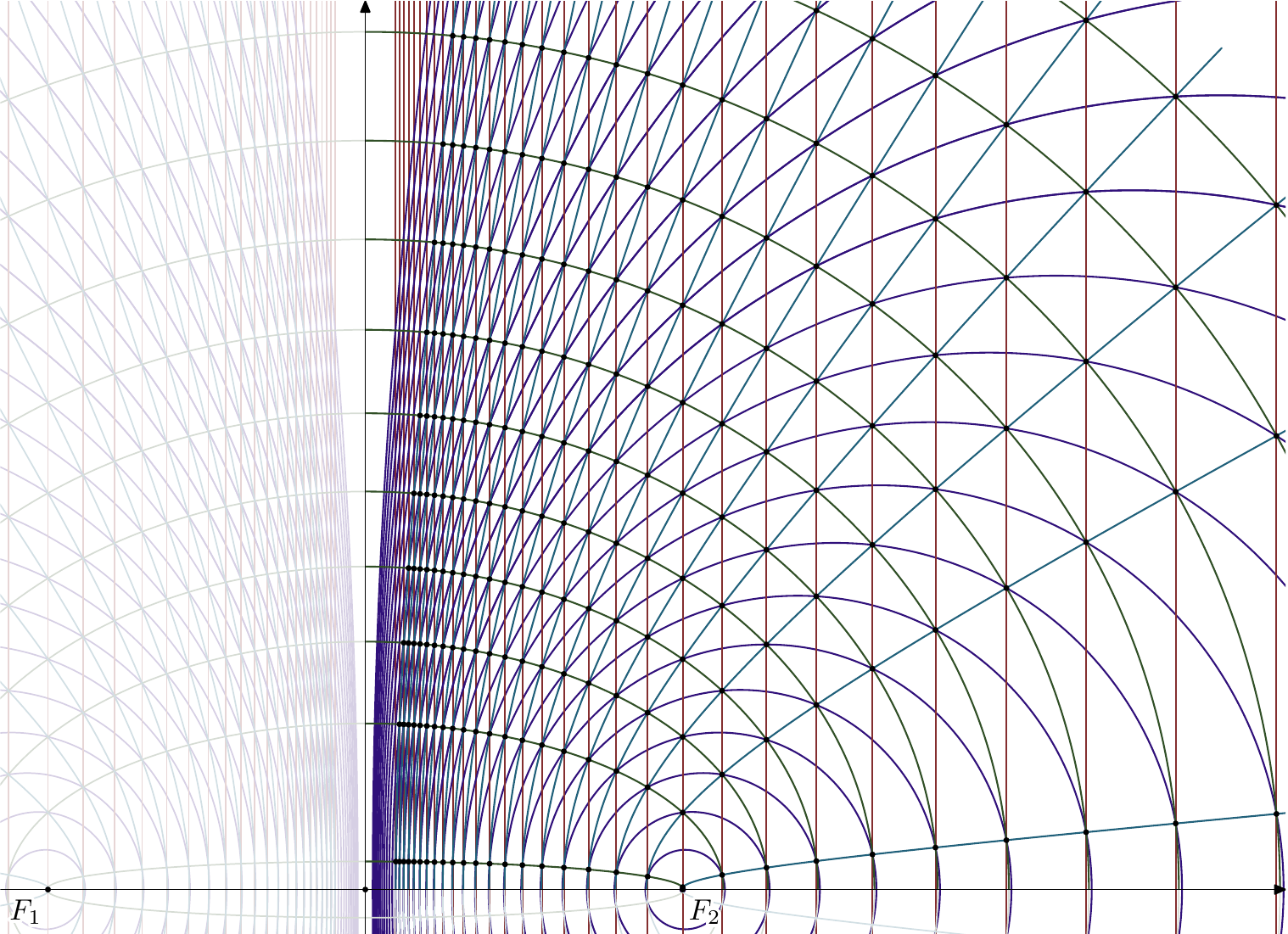}\\
	\f \label{fig:hip-el-ap}
\end{center}

\begin{center}
	\includegraphics{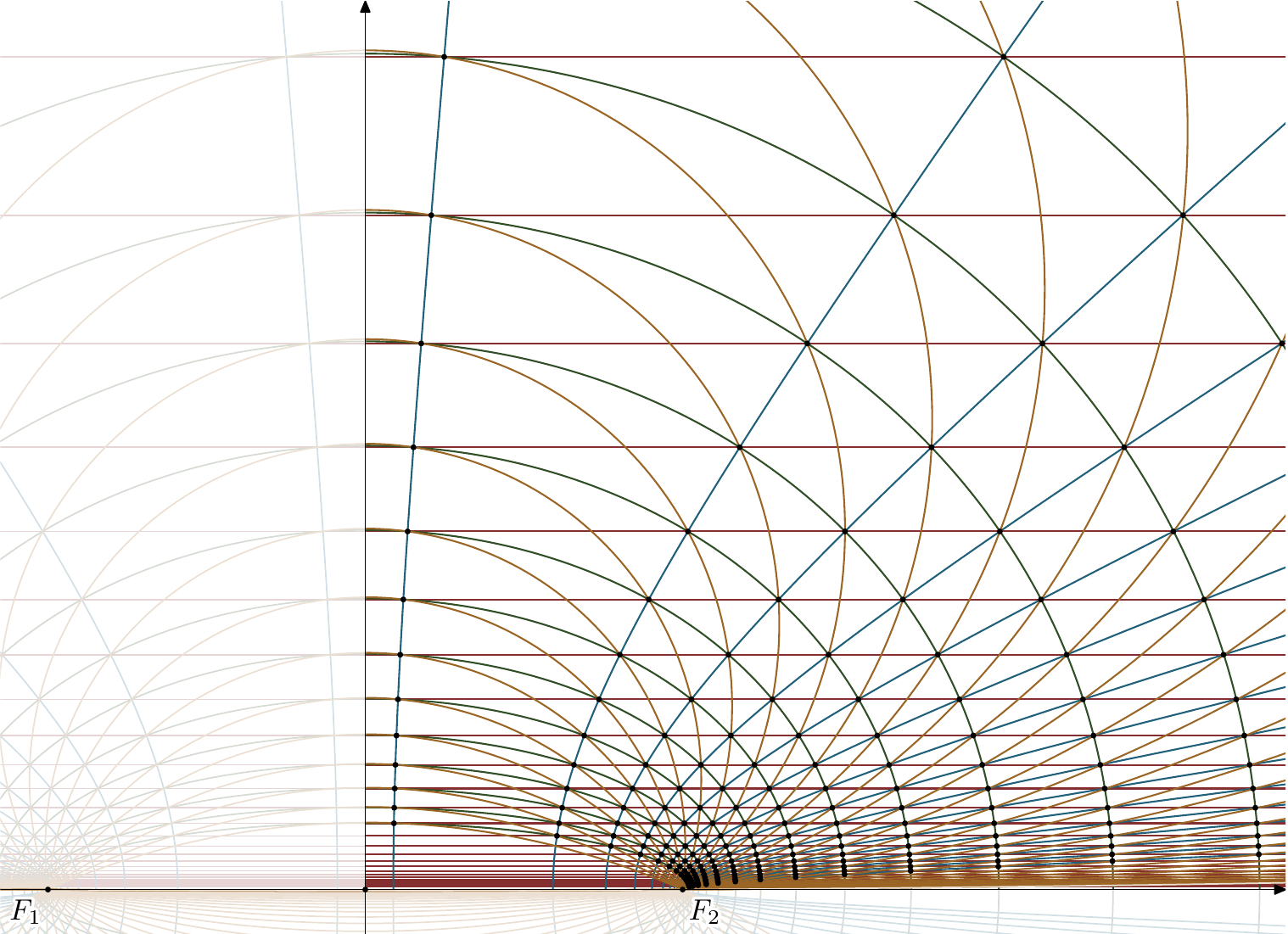}\\
	\f \label{fig:hip-ell-pas}
\end{center}

\end{document}